\documentclass[a4paper,12pt]{amsart}
\usepackage{amssymb}
\usepackage{ifthen}
\usepackage{graphicx}
\usepackage{float}
\nonstopmode
\setlength{\textwidth}{16cm} \setlength{\oddsidemargin}{0cm}
\setlength{\evensidemargin}{0cm} \setlength{\footskip}{40pt}
\pagestyle{plain}

\newtheorem{thm}{Theorem}
\newtheorem{cor}{Corollary}
\newtheorem{lem}{Lemma}

\newtheorem{conj}{Conjecture}
\newtheorem{prob}{Problem}

\theoremstyle{definition}
\newtheorem{defn}{Definition}[section]
\newtheorem{example}{Example}

\newenvironment{rem}{%
\bigskip
\noindent \textsl{{\sl Remark. }}}{\bigskip}
\newenvironment{rems}{%
\bigskip
\noindent \textsl{{\sl Remarks. }}}{\bigskip}

\newenvironment{pf}[1][]{%
 \vskip 1mm
 \noindent
 \ifthenelse{\equal{#1}{}}%
  {{\slshape Proof. }}%
  {{\slshape #1.} }%
 }%
{\qed\medskip}

\newcounter{alphabet}
\newcounter{tmp}


\makeatletter
\newcommand{\Ref}[1]{\@ifundefined{r@#1}{}{\setcounter{tmp}{\ref{#1}}\Alph{tmp}}}
\makeatother

\newcommand{\IR}{{\mathbb R}}
\newcommand{\IN}{{\mathbb N}}
\newcommand{\IC}{{\mathbb C}}
\newcommand{\ID}{{\mathbb D}}





\def\be{\begin{equation}}
\def\ee{\end{equation}}

\newcommand{\bee}{\begin{enumerate}}
\newcommand{\eee}{\end{enumerate}}

\newcommand{\blem}{\begin{lem}}
\newcommand{\elem}{\end{lem}}
\newcommand{\bthm}{\begin{thm}}
\newcommand{\ethm}{\end{thm}}
\newcommand{\bcor}{\begin{cor}}
\newcommand{\ecor}{\end{cor}}
\newcommand{\beg}{\begin{example}}
\newcommand{\eeg}{\end{example}}
\newcommand{\begs}{\begin{examples}}
\newcommand{\eegs}{\end{examples}}
\newcommand{\bdefe}{\begin{defn}}
\newcommand{\edefe}{\end{defn}}
\newcommand{\bprob}{\begin{prob}}
\newcommand{\eprob}{\end{prob}}
\newcommand{\bques}{\begin{ques}}
\newcommand{\eques}{\end{ques}}
\newcommand{\bei}{\begin{itemize}}
\newcommand{\eei}{\end{itemize}}
\newcommand{\bcon}{\begin{conj}}
\newcommand{\econ}{\end{conj}}
\newcommand{\bcons}{\begin{conjs}}
\newcommand{\econs}{\end{conjs}}
\newcommand{\bprop}{\begin{propo}}
\newcommand{\eprop}{\end{propo}}
\newcommand{\br}{\begin{rem}}
\newcommand{\er}{\end{rem}}
\newcommand{\brs}{\begin{rems}}
\newcommand{\ers}{\end{rems}}
\newcommand{\bo}{\begin{obser}}
\newcommand{\eo}{\end{obser}}
\newcommand{\bos}{\begin{obsers}}
\newcommand{\eos}{\end{obsers}}
\newcommand{\bpf}{\begin{pf}}
\newcommand{\epf}{\end{pf}}
\newcommand{\ba}{\begin{array}}
\newcommand{\ea}{\end{array}}
\newcommand{\beq}{\begin{eqnarray}}
\newcommand{\beqq}{\begin{eqnarray*}}
\newcommand{\eeq}{\end{eqnarray}}
\newcommand{\eeqq}{\end{eqnarray*}}

\newcommand{\ra}{\rightarrow}

\newcommand{\ds}{\displaystyle}

\newcounter{minutes}\setcounter{minutes}{\time}
\divide\time by 60
\newcounter{hours}\setcounter{hours}{\time}
\multiply\time by 60 \addtocounter{minutes}{-\time}

\begin{document}
\bibliographystyle{amsplain}
\title[The Jacobian conjecture and injectivity conditions]{The Jacobian conjecture and injectivity conditions}

\thanks{
File:~\jobname .tex,
          printed: \number\day-\number\month-\number\year,
          \thehours.\ifnum\theminutes<10{0}\fi\theminutes}


\author{Saminathan Ponnusamy, and Victor V. Starkov}


\address{S. Ponnusamy, Department of Mathematics,
Indian Institute of Technology Madras, Chennai--600 036, India.}

%
\email{samy@isichennai.res.in, samy@iitm.ac.in}

\address{V. V. Starkov,
Department of Mathematics, Petrozavodsk State University,
ul. Lenina 33, 185910 Petrozavodsk, Russia
}
\email{vstarv@list.ru }

\subjclass[2000]{Primary: 14R15, 32A10; Secondary: 31A05, 31C10}
\keywords{Univalent, injectivity, Jacobian, polynomial map, Keller map, and Jacobian conjecture}

\begin{abstract}
One of the aims of this article is to provide a class of polynomial mappings for which the Jacobian conjecture is true.
Also, we state and prove several global univalence theorems and present a couple of applications of them.
\end{abstract}

\thanks{}

\maketitle
\pagestyle{myheadings}
\markboth{S. Ponnusamy, and V. V. Starkov}{The Jacobian conjecture and injectivity conditions}

\section{Introduction and Main Results}\label{sec1}
This article mainly concerns with mappings $f :\,\IC^n\ra \IC^n$, written in coordinates as
$$ f(Z) =(f_1(Z),\ldots,f_n(Z)), \quad Z= (z_1,\ldots ,z_n).
$$
We say that $f$ is a \textit{polynomial map} if each component function $f_i :\,\IC^n\ra \IC$
is a polynomial in $n$-variables $z_1, \ldots, z_n$, for $1\leq i\leq n$.
A polynomial map $f :\,\IC^n\ra \IC^n$ is called \textit{invertible} if it has an inverse map  which is also a
polynomial map.


Let $Df:=\left (\frac {\partial f_j}{\partial z_i}\right )_{n\times n}$,  $ 1\leq i,j\leq n$, be the Jacobian matrix of $f$.
The Jacobian determinant is denoted by $\det Df$.
If a polynomial map $f$ is invertible and $g=f^{-1}$, then $g\circ f={\rm id}$
and, because $\det Df \cdot \det Dg=1$,  $\det Df$ must be a  non-zero complex constant.
However,  the converse question is more difficult.
Then the Jacobian conjecture (\textbf{JC})
asserts that every polynomial mapping $f:\,\IC^n\ra \IC^n$ is globally invertible
if  $\det Df$ is identically equal to a non-zero complex constant. This conjecture remains open for any dimension $n\ge 2$.
We remark that the \textbf{JC} was originally formulated by
Keller \cite{Keller39} in 1939 for polynomial maps with integer coefficients. In the case of dimension one, it is simple.
Polynomial map $f$ is called a \textit{Keller map}, if $\det Df$ is a non-zero complex constant.
In fact, Bialynicki-Birula and Rosenlicht \cite{BbRo62} proved that a polynomial map
is invertible if it is injective.

It is a simple exercise to see that the \textbf{JC} is true if it holds for polynomial mappings whose Jacobian determinant is $1$ and
thus, after suitable normalization, one can assume that $\det Df= 1$.
The \textbf{JC}  is attractive because of the simplicity of its statement. Moreover, because there are so many ways to approach and making it useful,
the \textbf{JC} has been studied extensively from calculus to complex analysis to algebraic topology, and from commutative algebra to differential
algebra to algebraic geometry. Indeed, some faulty proofs have even been published. The \textbf{JC} is stated as one of the eighteen challenging problems
for the twenty-first century proposed with brief details by Field medalist Steve Smale \cite{smale98}.
For the importance, history,  a detailed account of the research on the \textbf{JC} and equivalent conjectures, and related investigations,
we refer for example to \cite{BaCoWr82} and
the excellent book of van den Essen \cite{Essen00} and the references therein. See also \cite{Dr83,Dr91,Dr91a,Dr93,DrRu85,DrTu92,{Kul-93},Wright81,Wright05}.
We would like to point out that in 1980, Wang \cite{Wang80} showed that every Keller map of degree less than or equal to $2$ is invertible.
In 1982, Bass et. al \cite{BaCoWr82} (see also \cite{Dr83,Yag80}) showed that it suffices to prove the \textbf{JC} for all $n\ge 2$ and all Keller
mappings of the form $f(Z)=Z+H(Z)$, where $Z= (z_1,\ldots ,z_n)$,
and $H$ is cubic homogeneous, i.e., $H=(H_1, \ldots, H_n)$ with $H_i(Z)=(L_i(Z))^3$ and $L_i(Z)=a_{i1}z_1+\cdots +a_{in}z_n$, $1\leq i\leq n$.
Cubic homogeneous map $f$ of this form is called a Dru\.{z}kowski or cubic linear map.
Moreover, polynomial mappings from $\IC^n$ to $\IC^n$ are well behaved than the polynomial mappings from $\IR^n$ to $\IR^n$. Indeed,
Pinchuk \cite{Pinch94} constructed an explicit example to show that there exists
a non-invertible polynomial map $f:\IR^2\rightarrow \IR^2$ with $\det Df(X)\neq 0$ for all $X\in \IR^2$.
In any case, the study of the \textbf{JC} has given rise to several surprising results and interesting relations in various directions
and  in different perspective.
For instance,   Abdesselam \cite{Ab2003} formulated the \textbf{JC} as a question in perturbative quantum field theory and pointed out that
any progress on this question will be beneficial not only for mathematics, but also for
theoretical physics as it would enhance our understanding of perturbation theory.



The main purpose of this work is to identify  the Keller maps for which the \textbf{JC} is true.

\begin{thm}\label{cor4a}
The Jacobian conjecture is true for mappings $F(X)=(A \circ f \circ B)(X),$ where  $X=(x_1,\ldots, x_n)\in \IR^n$,
$A$ and $B$ are linear such that $\det A .\det B \neq0$, $f=(u_1,\ldots,u_n),$
$$u_k(X)=x_k+\gamma_k\left[\alpha_2(x_1+\cdots+x_n)^2+\alpha_3(x_1+\cdots+x_n)^3+\cdots+\alpha_m(x_1+\cdots+x_n)^m\right]
$$
for $k=1,\ldots,n$, $\alpha_j,\gamma_k\in\IR$ with $\sum_{k=1}^n\gamma_k=0$ and $m\in \IN$.
\end{thm}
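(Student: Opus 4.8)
The plan is to peel off the linear factors $A$ and $B$ and then invert the middle map $f$ by hand. Since $A$ and $B$ are linear with $\det A\cdot\det B\neq 0$, each is a bijection (of $\IR^n$, and of $\IC^n$ after complexification) with a linear, hence polynomial, inverse; consequently $F=A\circ f\circ B$ is globally invertible with polynomial inverse if and only if $f$ is. So it suffices to show that $f$ is a polynomial automorphism.

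First I would expose the structural feature of $f$. Writing $s(X)=x_1+\cdots+x_n$, $\gamma=(\gamma_1,\ldots,\gamma_n)$, and $P(t)=\alpha_2t^2+\alpha_3t^3+\cdots+\alpha_mt^m$, the map takes the compact form $f(X)=X+P(s(X))\,\gamma$. The decisive observation is that the hypothesis $\sum_{k=1}^n\gamma_k=0$ forces the linear form $s$ to be invariant under $f$:
\[
s(f(X))=s(X)+P(s(X))\sum_{k=1}^n\gamma_k=s(X).
\]

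Next I would use this invariance to write the inverse explicitly. Set $g(Y)=Y-P(s(Y))\,\gamma$. By the same computation (again using $\sum_k\gamma_k=0$) one has $s(g(Y))=s(Y)$, and a direct substitution then yields $g(f(X))=X$ and $f(g(Y))=Y$. Hence $g$ is a two-sided inverse of $f$ and is manifestly a polynomial map, so $f$ is a polynomial automorphism. Therefore $F^{-1}=B^{-1}\circ g\circ A^{-1}$ is polynomial and $F$ is globally invertible; in particular the Jacobian conjecture holds for every such $F$ (indeed without even invoking the Keller hypothesis).

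There is no serious obstacle once the invariance of $s$ is noticed; the only point requiring care is the composition identity, which reduces to the single fact that the scalar $P(s)$ is unchanged when one applies $f$ or $g$. I would close with the remark that the same hypothesis $\sum_k\gamma_k=0$ makes these maps genuinely Keller: since $Df$ is the identity plus the rank-one perturbation $P'(s)$ times the outer product of $\gamma$ with $(1,\ldots,1)$, the matrix determinant lemma gives $\det Df=1+P'(s)\sum_k\gamma_k=1$. Thus the constancy of the Jacobian and the explicit global invertibility spring from one and the same condition.
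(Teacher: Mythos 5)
Your proof is correct, and it takes a genuinely different---in fact shorter and stronger---route than the paper's. The paper reduces to $A=B=I$ just as you do, but then proceeds in two steps: it first proves $\det Df\equiv 1$ by an inductive expansion of the determinant of the rank-one perturbation of the identity (your matrix determinant lemma remark compresses that induction to one line), and then it applies its own integral injectivity criterion for $C^1$ maps on convex domains (Theorem \ref{GPS3-lem1}), computing that the matrix $\left(\int_0^1 \frac{\partial u_k}{\partial x_j}(\gamma(t))\,dt\right)_{j,k}$ has determinant $1+\left(\sum_{k}\gamma_k\right)L_*=1\neq 0$, whence $f$ is injective on $\IR^n$. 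The passage from injectivity to the full conclusion of the Jacobian conjecture then rests on the theorem of Bialynicki-Birula and Rosenlicht, cited in the introduction, that injective polynomial maps are invertible. Your argument bypasses both ingredients: the invariance $s(f(X))=s(X)$, which is exactly the role the hypothesis $\sum_k\gamma_k=0$ plays, hands you the explicit polynomial inverse $g(Y)=Y-P(s(Y))\,\gamma$, and the two-sided verification $g\circ f=\id$, $f\circ g=\id$ is purely algebraic. This yields the stronger conclusion that $f$ (hence $F$) is a polynomial automorphism with an explicitly written inverse, it needs no external invertibility theorem, and it works verbatim over $\IC^n$. What the paper's longer route buys is a demonstration of its Section \ref{sec2} univalence machinery in action, which is one of the article's stated purposes; your route is the one a reader interested only in Theorem \ref{cor4a} itself would prefer.
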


Often it is convenient to identify $X$ in $\IC^n$ (resp. $\IR^n$) as an $n\times 1$ matrix with entries as complex (resp. real) numbers.
It is interesting to know whether there are other polynomial mappings for which the \textbf{JC} is true in $\IC^n$ (resp. $\IR^n$).
In this connection, we will notice that for the case $n=2$ of Theorem \ref{cor4a} it is possible to prove the following:

\begin{thm}\label{thm5}
With $X=(x,y)\in \IR^2$, consider $f(X)=(u_1(X),u_2(X))$, where $u_k(X)$ for $k=1,2$ are as in Theorem \ref{cor4a} and
$$\tilde{f}(X)=(u_1(X)+W(X),u_2(X)+w(X)),
$$
where $W$ and $w$ are homogeneous polynomials of degree $(m+1)$ in $x$ and $y$. If $\det D{\tilde{f}}(X)\equiv 1$, then
$ \tilde{f}=A^{-1} \circ F\circ A,$
where $A$ is linear homogeneous nondegenerate mapping and
$$F(X)=(u_1(X)+\alpha_{m+1}(x+y)^{m+1}, u_2(X)-\alpha_{m+1}(x+y)^{m+1}),
$$
for some real constant $\alpha_{m+1}$. The Jacobian conjecture is true for the mapping $\tilde{f}$.
\end{thm}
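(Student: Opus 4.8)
The idea is to read off the shape of $W$ and $w$ directly from the scalar identity $\det D\tilde f\equiv1$ by splitting it into homogeneous components. Put $s=x+y$ and $\gamma=\gamma_1=-\gamma_2$, so that $f(X)=X+\gamma P(s)\,(1,-1)$ with $P(t)=\alpha_2t^2+\cdots+\alpha_mt^m$. Writing $W_x,W_y,w_x,w_y$ for the first partials, a direct computation gives
\[
\det D\tilde f-1=(W_x+w_y)+\gamma P'(s)\big[(W_y-W_x)+(w_y-w_x)\big]+(W_xw_y-W_yw_x).
\]
Since $W$ and $w$ are homogeneous of degree $m+1$, the three summands are homogeneous of degree $m$, a combination of degrees $m+1,\dots,2m-1$, and degree $2m$ respectively; these ranges being disjoint, each summand must vanish on its own. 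This always yields
\[
W_x+w_y=0\qquad\text{and}\qquad W_xw_y-W_yw_x=0,
\]
and, when $f$ is genuinely nonlinear (that is $\gamma\neq0$ and $P\not\equiv0$, so $\gamma P'\not\equiv0$), the additional relation $(W_y-W_x)+(w_y-w_x)=0$.

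First I would exploit the two universal equations. The first is exactly the integrability condition that lets me introduce a homogeneous potential $\Phi$ of degree $m+2$ with $W=\Phi_y$ and $w=-\Phi_x$; substituting into the second equation turns it into the vanishing of the Hessian determinant, $\Phi_{xx}\Phi_{yy}-\Phi_{xy}^2\equiv0$. At this point I would invoke the classical theorem of Hesse that a binary form whose Hessian vanishes identically is a constant times a power of a single linear form, so $\Phi=c\,\ell^{\,m+2}$ with $\ell=px+qy$. Differentiating gives $(W,w)=C\,\ell^{\,m+1}(q,-p)$ with $C=c(m+2)$, so that $\tilde f(X)=\big(u_1(X)+Cq\,\ell^{\,m+1},\,u_2(X)-Cp\,\ell^{\,m+1}\big)$.

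Now I would split into two cases. If $f$ is nonlinear, the third relation says $(\partial_x-\partial_y)(W+w)=0$, i.e. $W+w$ depends only on $s$; comparing with $W+w=C(q-p)\ell^{\,m+1}$ forces $(W,w)=\alpha_{m+1}\,s^{\,m+1}(1,-1)$, whence $\tilde f=F$ exactly and one may take $A=\id$. If instead $f=\id$ (the case $\gamma P\equiv0$), the third relation is absent and $\ell$ may be any linear form; here the role of $A$ is to straighten $\ell$ to $x+y$. Concretely, I would take the nondegenerate linear map $A$ with first row $(a,b)$ satisfying $aq-bp\neq0$ and second row $(p-a,q-b)$: then $\det A=aq-bp\neq0$, the row sums give $(1,1)A=(p,q)$, i.e. the coordinate sum of $AX$ equals $\ell(X)$, and $A(q,-p)^{\top}=\mu(1,-1)^{\top}$ with $\mu=aq-bp$. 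A one-line check then shows $A\circ\tilde f=F\circ A$ with $\alpha_{m+1}=C\mu$, i.e. $\tilde f=A^{-1}\circ F\circ A$.

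Finally, that the \textbf{JC} holds for $\tilde f$ is immediate from Theorem \ref{cor4a}: in either case $F$ is again a map of the type treated there (the extra term $\pm\alpha_{m+1}(x+y)^{m+1}$ only raises the admissible degree from $m$ to $m+1$ while preserving $\gamma_1+\gamma_2=0$), hence invertible, and $\tilde f=A^{-1}\circ F\circ A$ is then the composition of such an $F$ with nondegenerate linear maps, which is once more of the form covered by Theorem \ref{cor4a}. I expect the main obstacle to be the identity case: recognizing that the two surviving equations encode a binary form with identically vanishing Hessian and then applying the correct classification, after which the construction of $A$ is routine linear algebra. Some care is also needed in the degree bookkeeping that isolates the three equations in the first place.
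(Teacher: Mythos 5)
Your proposal is correct, and it reaches the same structural conclusion as the paper but by a genuinely different middle step. Both arguments start from the same expansion of $\det D\tilde f-1$ (your formula agrees with the paper's equation (2), with $L=\gamma P'(s)$) and both isolate homogeneous pieces; the paper phrases this as ``letting $\|X\|\to\infty$'' and ``$\|X\|\to 0$'', while you split cleanly by degree into the three equations $W_x+w_y=0$, $W_xw_y-W_yw_x=0$, and (in the nonlinear case) $(W+w)_x=(W+w)_y$. The real divergence is in how the shape of $(W,w)$ is extracted from the two universal equations. The paper first uses only the degree-$2m$ equation to get proportionality $w=\lambda W$, via an ODE argument in $t=x/y$ ($q'/q=p'/p$, hence $q=\lambda p$), which requires separate treatment of the degenerate cases $W\equiv 0$ or $w\equiv 0$; it then substitutes into the remaining identity, reads off $\lambda W_y+W_x=0$ from the lowest-degree part, and solves this first-order equation to get $W=\beta_0(y-\lambda x)^{m+1}$. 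You instead use the degree-$m$ equation first to introduce a homogeneous potential $\Phi$ with $(W,w)=(\Phi_y,-\Phi_x)$, turn the degree-$2m$ equation into $\Phi_{xx}\Phi_{yy}-\Phi_{xy}^2\equiv 0$, and invoke Hesse's theorem on binary forms to get $\Phi=c\,\ell^{m+2}$; this handles all degenerate cases uniformly and with no divisions, at the cost of importing a classical result (which does hold for binary forms, and descends from $\IC$ to $\IR$ by a short unique-factorization argument you should at least mention). In the identity case your single conjugating matrix, built from the rows $(a,b)$ and $(p-a,q-b)$, replaces the paper's two explicit matrices for the sub-cases $\lambda\neq 0$ and $\lambda=0$, which is a mild streamlining; and in the nonlinear case your use of the middle-degree relation to force $\ell\propto x+y$ is equivalent to the paper's extraction of $\alpha_m(1+\lambda)^2=0$. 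The final appeal to Theorem \ref{cor4a} for the Jacobian conjecture claim is the same in both. Net effect: your route is more conceptual and avoids the case-splitting and implicit nonvanishing assumptions in the paper's ratio argument, while the paper's route is more elementary and self-contained.
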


\br
It follows from the proof of Theorem \ref{thm5} that $A$ equals the identity matrix $I$ if $f(X)\not\equiv X$.
\er


In connection with Theorems \ref{cor4a} and \ref{thm5}, it is interesting to note that in the case
$n=2$, the mappings $F$ defined in Theorem \ref{cor4a} provides a complete description of the
Keller mappings $F$ for which ${\rm deg}\, F \leq 3$ (see \cite{star2016}).



Next, we denote by $\widehat{\mathcal{P}}_n(m)$, the set of all polynomial mappings $F:\,\IR^n\rightarrow \IR^n$ of degree less than or equal to $m$
such that  $DF(0)=I$  and $F(0)=0$. Let $P_n(m)$ be a subset consisting of mappings $f\in\widehat{\mathcal{P}}_n(m)$ which
satisfy the conditions of Theorem \ref{cor4a}. Also, we introduce
$$\mathcal{P}_n(m)=\{F\in  \widehat{\mathcal{P}}_n(m):\, \mbox{$F$ is injective}\}.
$$
If $f,g\in P_n(m)$, then
$$f(X)=X+u(X)\gamma ~\mbox{ and }~ g(X)=X+v(X)\delta,
$$
where $X=(x_1,\dots,x_n)$, $\gamma =(\gamma_1,\dots,\gamma_n)$, $\delta=(\delta_1,\dots,\delta_n)$,
$$u(X)=\sum_{k=2}^m\alpha_k(x_1+\dots +x_n)^k
~\mbox{ and }~
v(X)=\sum_{k=2}^m\beta_k(x_1+\dots +x_n)^k, 
$$
such that $\sum_{k=1}^n\delta_k=\sum_{k=1}^n\gamma_k=0$. Here $\alpha_k$'s and $\beta_k$'s are some constants.
It is obvious that $f\circ g$ is injective but it is unexpected that the composition $f\circ g$ also belongs to $\mathcal{P}_n(m)$.
This circumstance allows us to generalize Theorem \ref{cor4a} significantly.

\begin{thm}\label{thm1a}
For $n\ge 3$, consider the mapping $f:\,\IR^n\rightarrow \IR^n$ defined by $f(X)=(u_1,\ldots,u_n)$, where
$$u_k(X)=x_k+p_k^{(2)}(x_1+\dots +x_n)^2+\cdots+p_k^{(m)}(x_1+\dots +x_n)^m~ \quad (k=1,\ldots, n),
$$
and $p_k^{(l)}$ are constants satisfying the condition $\sum_{k=1}^np_k^{(l)}=0$ for all $l=2,\ldots,m$.
Then we have  $f\in\mathcal{P}_n(m)$.
\end{thm}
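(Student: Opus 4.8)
The plan is to verify membership in $\widehat{\PP}_n(m)$ first, which is routine, and then to establish injectivity by exhibiting an explicit polynomial inverse of $f$. The linear constraints $\sum_{k=1}^n p_k^{(l)}=0$ are exactly what make such an inverse available, so the whole argument will be organized around exploiting them.

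First I would record the elementary facts. Writing $s=x_1+\cdots+x_n$ and $P_k(s)=\sum_{l=2}^m p_k^{(l)}s^l$, so that $u_k(X)=x_k+P_k(s)$, each $u_k$ is a polynomial of degree at most $m$, and $P_k(0)=0$ gives $f(0)=0$. Since $\partial s/\partial x_j=1$ for every $j$, the chain rule yields $\partial u_k/\partial x_j=\delta_{kj}+P_k'(s)$; because $P_k'$ vanishes at $s=0$ (its lowest-order term is linear in $s$), this gives $Df(0)=I$. Hence $f\in\widehat{\PP}_n(m)$, and it remains only to prove injectivity.

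The crucial step is to notice that the nonlinear part of $f$ depends on $X$ only through the single scalar $s$, and that $s$ is an invariant of $f$. Summing the components and using $\sum_{k=1}^n p_k^{(l)}=0$ for each $l$,
$$\sum_{k=1}^n u_k(X)=\sum_{k=1}^n x_k+\sum_{l=2}^m\Big(\sum_{k=1}^n p_k^{(l)}\Big)s^l=s.$$
Thus if $Y=f(X)$, then $t:=\sum_{k=1}^n y_k$ equals $s$; that is, the value of $s$ is recoverable from the image point $Y$ alone. From here injectivity is immediate, and in fact $f$ is polynomially invertible: given $Y$, set $t=\sum_k y_k$, and since $s=t$ the identity $y_k=x_k+P_k(s)$ forces $x_k=y_k-P_k(t)$, recovering $X$ uniquely. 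The candidate inverse $g(Y)=(y_1-P_1(t),\ldots,y_n-P_n(t))$ with $t=\sum_k y_k$ is then a polynomial map, and one checks $g=f^{-1}$. Therefore $f\in\PP_n(m)$.

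I do not expect a genuine obstacle here: the entire proof hinges on the observation that the correction term is a function of the invariant $s$, after which the constraint collapses the problem. The only place demanding care is the consistency verification that $g\circ f=f\circ g=\id$, where the hypothesis $\sum_k p_k^{(l)}=0$ must be invoked a second time to confirm $\sum_k\big(y_k-P_k(t)\big)=t$. The assumption $n\ge 3$ plays no role in the argument itself; it merely signals that for $n\ge 3$ this family is strictly larger than the one in Theorem~\ref{cor4a}, since for $n=2$ the single constraint $p_1^{(l)}+p_2^{(l)}=0$ already forces the special form appearing there.
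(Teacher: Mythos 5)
Your proof is correct, and it takes a genuinely different and substantially shorter route than the paper. The paper proves this theorem by decomposition: it first establishes (Lemma \ref{lem2-new}) that compositions $G_1\circ\cdots\circ G_N$ of maps from $P_n(m)$ (the class of Theorem \ref{cor4a} with $A=B=I$) again have the additive form $X+\sum_j u^{(j)}(X)\gamma^{(j)}$ and are injective, and then shows that any $f$ of the stated form arises as such a composition by solving the linear systems $\sum_{j}\gamma_k^{(j)}A_l^{(j)}=p_k^{(l)}$, $l=2,\ldots,m$, via a rank (Kronecker--Capelli) argument, which is where the constraints $\sum_k p_k^{(l)}=0=\sum_k\gamma_k^{(j)}$ enter. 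You bypass all of this by observing that $s=x_1+\cdots+x_n$ is an invariant of $f$ (precisely because $\sum_k p_k^{(l)}=0$), so that $s$ can be read off from the image point, and the map can then be explicitly inverted: $g(Y)=\bigl(y_1-P_1(t),\ldots,y_n-P_n(t)\bigr)$ with $t=\sum_k y_k$. Interestingly, this same invariance observation appears inside the paper's proof of Lemma \ref{lem2-new} (where it is shown that $\sum_k g_k^{(2)}(X)=z$), but the paper uses it only to simplify Jacobian computations rather than to invert $f$ outright. Your argument buys several things: it is self-contained (no appeal to Theorem \ref{cor4a} or to the composition lemma), it works verbatim for all $n\ge 2$ and over $\IC^n$, and it yields more than injectivity --- an explicit polynomial inverse, which moreover lies in the same family (replace $p_k^{(l)}$ by $-p_k^{(l)}$), so these maps are seen directly to be polynomial automorphisms without invoking the Bialynicki-Birula--Rosenlicht theorem. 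What the paper's longer route buys in exchange is structural information: it exhibits $f$ as a product of the elementary maps of Theorem \ref{cor4a}, showing that the class $\mathcal{P}_n(m)$ contains the semigroup generated by $P_n(m)$ under composition, which is the viewpoint motivating the lemma and the example following it.
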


From the proofs of Theorems \ref{cor4a},  \ref{thm5} and \ref{thm1a},  it is easy to see that these results continue to hold even
if we replace $\IR^n$ by $\IC^n$. The proofs of Theorems \ref{cor4a},  \ref{thm5} and \ref{thm1a} will be presented in
Section  \ref{sec3}. In Section \ref{sec2}, we present conditions for injectivity of functions defined on a convex domain.

\section{Injectivity conditions on convex domains} \label{sec2}

One can find discussion and several sufficient conditions for global injectivity \cite{AvkAks75,TP83}. In the following, we state
and prove several results on injectivity on convex domains.

\bthm\label{GPS3-lem1}
Let $D\subset\IR^n$ be convex and $f:\, D\rightarrow\IR^n$ belong to $C^1(D)$. Then $f=(f_1,\ldots,f_n)$ is injective in $D$ if
for every $X_1, X_2\in D$ $(X_1\neq X_2)$ and $\gamma (t)=X_1+t(X_2-X_1)$ for $t\in[0,1]$, $\det A\neq 0,$ where
$$A=(a_{ij})_{n\times n} ~\mbox{ with }~a_{ij}=\int_0^1 \frac {\partial f_j}{\partial x_i} (\gamma (t))\,dt, \, 1\leq i,j\leq n .
$$
\ethm
\begin{proof}
Let $X_1,X_2\in D$ be two distinct points.
Since $D$ is convex, the line segment $\gamma(t)\in D$ for $t\in (0,1)$ and thus, we have
$$
f(X_2)-f(X_1)  =\int_\gamma d f(\zeta ) = \int_\gamma D f(\zeta )\,d\zeta  = \int_0^1 (Df)(\gamma (t))(X_2-X_1)\,dt .
$$
Taking into account of the assumptions, we deduce that $f(X_2)-f(X_1)\neq0$ for each $X_1,X_2\in D$ $(X_1\neq X_2)$ if  
for every $X\in \IR^n \backslash \{0\},$
$$\int_0^1 Df(\gamma(t)) \,dt\cdot X\neq 0
$$
which holds whenever $\det A\neq 0.$ The proof is complete.
\end{proof}

We remark that Theorem \ref{GPS3-lem1} has obvious generalization for functions defined on convex domains $D \subseteq  \IC^n$.
In this case,  the Jacobian matrix of $f$, i.e. $Df=\left (\frac {\partial f_j}{\partial z_i}\right )_{n\times n}$,  $ 1\leq i,j\leq n$,
will be used.  Moreover, using Theorem \ref{GPS3-lem1}, we may easily obtain the
following simple result.

\begin{cor}\label{cor1}
Let $D \subseteq  \IR^n$ be a convex domain and $f=(f_1,\ldots,f_n)$ belong to $C^1(D)$. If for every line $L$ in $\IR^n$, with
$L\cap D\neq\emptyset,$
$$\det \left (\frac{\partial f_j}{\partial x_i}(X_{i,j})\right )\neq0 ~\mbox{for every $ X_{i,j}\in L \cap D$,}
$$
then $f$ is injective in $D$.
\end{cor}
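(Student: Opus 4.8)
The plan is to deduce the corollary directly from Theorem~\ref{GPS3-lem1}, the bridge being the mean value theorem for integrals applied separately to each entry of the averaged Jacobian matrix $A$. First I would fix two distinct points $X_1, X_2 \in D$ and let $L$ denote the line through them, with $\gamma(t) = X_1 + t(X_2 - X_1)$ for $t\in[0,1]$. Since $D$ is convex, the whole segment $\gamma([0,1])$ lies in $D$, so every $\gamma(t)$ belongs to $L \cap D$, and in particular $L \cap D \neq \emptyset$. By Theorem~\ref{GPS3-lem1} it suffices to show that the matrix $A = (a_{ij})$ with $a_{ij} = \int_0^1 \frac{\partial f_j}{\partial x_i}(\gamma(t))\,dt$ has nonzero determinant.

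The key step is to collapse each of these integrals to a single-point evaluation. Because $f \in C^1(D)$, the function $t \mapsto \frac{\partial f_j}{\partial x_i}(\gamma(t))$ is continuous on $[0,1]$, so the mean value theorem for integrals supplies a parameter $t_{ij} \in [0,1]$ with
$$a_{ij} = \frac{\partial f_j}{\partial x_i}\big(\gamma(t_{ij})\big).$$
Setting $X_{i,j} := \gamma(t_{ij})$, each of these $n^2$ sample points lies on $L \cap D$. The hypothesis of the corollary is precisely a statement about arbitrary configurations of points drawn from $L \cap D$, so it applies to this particular configuration and yields $\det A = \det\big(\frac{\partial f_j}{\partial x_i}(X_{i,j})\big) \neq 0$.

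With $\det A \neq 0$ established for the segment joining any two distinct points of $D$, Theorem~\ref{GPS3-lem1} immediately gives that $f$ is injective on $D$. The only genuinely substantive point --- and what makes the corollary a real strengthening rather than a mere restatement of the nonvanishing of the ordinary Jacobian determinant --- is the recognition that, while the entrywise mean value theorem forces us to sample $\frac{\partial f_j}{\partial x_i}$ at $n^2$ generally distinct points, all of these points are constrained to lie on the single line $L$. For the reduction to Theorem~\ref{GPS3-lem1} to succeed, the hypothesis must therefore be formulated with exactly this flexibility: different entries may be evaluated at different points, provided those points share a common line. This is precisely the content of the stated condition, so no step beyond the entrywise mean value theorem and an appeal to Theorem~\ref{GPS3-lem1} is needed.
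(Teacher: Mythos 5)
Your proposal is correct and follows essentially the same route as the paper: apply the mean value theorem for integrals entrywise to the averaged Jacobian matrix $A$ from Theorem~\ref{GPS3-lem1}, note that all the resulting sample points $X_{i,j}$ lie on the common line through $X_1$ and $X_2$ intersected with $D$, and then invoke the hypothesis to get $\det A \neq 0$. The paper's proof is just a terser version of exactly this argument.
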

\begin{proof}
Let $X_1,X_2\in D$ be two distinct points and $\gamma (t)=X_1+t(X_2-X_1)$ be the line segment joining $X_1$ and $X_2$, $t\in [0,1]$.
Then, for every $i, j=1,\ldots, n$, we have
$$\int_0^1 \frac {\partial f_j}{\partial x_i} (\gamma (t))\,dt=\frac {\partial f_j}{\partial x_i} (X_{i,j}),
$$
where  $X_{i,j}\in (X_1,X_2)$. The desired conclusion follows from Theorem \ref{GPS3-lem1}.
\end{proof}

\begin{cor}\label{cor2}
Let $D \subseteq  \IC$ be a convex domain, $z=x+iy$ and $f(z)=u(x,y)+iv(x,y)$ be analytic in $D$. Then $f$ is injective in $D$ whenever
for every $z_1,z_2\in D$, we have
$$\det\begin{bmatrix}u_x(z_1)&-v_x(z_2)\\v_x(z_2)& u_x(z_1)\end{bmatrix} =u_x^2(z_1)+v_x(z_2)^2\neq0 .
$$
\end{cor}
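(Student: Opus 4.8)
The plan is to realize $f$ as a real map $(x,y)\mapsto(u(x,y),v(x,y))$ from $D\subseteq\IR^2$ into $\IR^2$ and to apply Theorem \ref{GPS3-lem1}. First I would fix two distinct points $z_1,z_2\in D$, set $\gamma(t)=z_1+t(z_2-z_1)$ for $t\in[0,1]$, and write down the matrix $A=(a_{ij})$ with $a_{ij}=\int_0^1(\partial f_j/\partial x_i)(\gamma(t))\,dt$ attached to this segment by the theorem, where $f_1=u$, $f_2=v$, $x_1=x$, $x_2=y$.

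The key simplification comes from the Cauchy--Riemann equations $u_x=v_y$ and $u_y=-v_x$, which hold because $f$ is analytic and hence $u,v\in C^\infty(D)$. Substituting these into the entries gives $a_{21}=\int_0^1 u_y\,dt=-\int_0^1 v_x\,dt=-a_{12}$ and $a_{22}=\int_0^1 v_y\,dt=\int_0^1 u_x\,dt=a_{11}$, so that $A$ has skew off-diagonal form and $\det A=a_{11}^2+a_{12}^2=\left(\int_0^1 u_x(\gamma(t))\,dt\right)^2+\left(\int_0^1 v_x(\gamma(t))\,dt\right)^2$.

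Next I would invoke the mean value theorem for integrals: since $u_x$ and $v_x$ are continuous on the segment, there exist points $\xi_1,\xi_2$ on $(z_1,z_2)\subseteq D$ with $\int_0^1 u_x(\gamma(t))\,dt=u_x(\xi_1)$ and $\int_0^1 v_x(\gamma(t))\,dt=v_x(\xi_2)$, whence $\det A=u_x(\xi_1)^2+v_x(\xi_2)^2$. The point worth emphasizing---and the reason the hypothesis is stated with two independent points $z_1,z_2$ rather than one---is precisely that the mean-value points $\xi_1$ and $\xi_2$ for the two integrals need not coincide; the determinant is an expression in $u_x$ and $v_x$ evaluated at possibly different points of $D$. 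Applying the hypothesis with the choice $z_1=\xi_1$ and $z_2=\xi_2$ yields $\det A\neq0$, and Theorem \ref{GPS3-lem1} then gives injectivity of $(u,v)$, i.e. of $f$.

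The argument is essentially routine once this matching is observed, so I do not anticipate a serious obstacle. The only point requiring a little care is to confirm that the transpose discrepancy between the matrix $A$ of Theorem \ref{GPS3-lem1} (whose $(i,j)$ entry is $\partial f_j/\partial x_i$) and the matrix displayed in the statement is immaterial, since a matrix and its transpose share the same determinant; both equal $u_x(z_1)^2+v_x(z_2)^2$.
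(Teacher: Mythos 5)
Your proof is correct and is essentially the argument the paper intends: Corollary \ref{cor2} carries no separate proof in the paper, being the specialization of Theorem \ref{GPS3-lem1} to the real map $(u,v)$ in which the Cauchy--Riemann equations reduce the matrix $A$ to the form $\bigl(\begin{smallmatrix} a_{11} & a_{12}\\ -a_{12} & a_{11}\end{smallmatrix}\bigr)$ and the mean value theorem for integrals is then applied, exactly as in the proof of Corollary \ref{cor1}. Your emphasis that the two mean-value points $\xi_1,\xi_2$ need not coincide---which is precisely why the hypothesis is stated with two independent points $z_1,z_2$, and also why one must apply Theorem \ref{GPS3-lem1} directly rather than quote Corollary \ref{cor1}---is exactly the right reading of the statement.
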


Corollary \ref{cor2} shows that if $u_x\ne 0$ or $v_x\ne 0$ in a convex domain $D$, then the analytic function $f=u+iv$ is univalent 
(injective) in $D$.
Thus, it is a sufficient condition for the univalency and is different from the necessary condition $f'(z)\neq 0$,
the fact that in the latter case $u_x$ and $v_x$ have no common zeros in $D$. The reader may compare with the
well-known Noshiro-Warschawski theorem which asserts that if $f$ is analytic in a convex domain $D$ in $\IC$ and ${\rm Re}\, f'(z)>0$
in $D$, then $f$ is univalent in $D$. See also Corollary \ref{GPS2-cor1}.

Throughout we let  $\IR^n_{+}=\{X=(x_1,\ldots, x_n)\in \IR^n:\, x_1>0\}$ and $S_{n-1}=\{X \in \IR^n:\, \|X\|=1\}$, the unit sphere in the
Euclidean space $\IR^n$.

\begin{thm}\label{thm1}
Let $D$ $\subseteq \IR^n$ be a convex domain, $f:\,D\rightarrow \IR^n$ belong to $C^1(D)$ and $ \Omega=f(D)$. Then $f$ is injective if and only if there exists a
$\phi\in C^1(\Omega),$ $\phi:\,\Omega\rightarrow\IR^n,$ satisfying the following property: for every $X_0\in S_{n-1}$ there exists a
unitary matrix $U=U(X_0)$ with
\begin{equation}\label{equ5}
U\cdot (D\phi)(f(X))\cdot Df(X)X_0\in \IR^n_+
\end{equation}
for every $X\in D$.
\end{thm}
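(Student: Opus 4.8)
The plan is to prove the two implications separately: sufficiency ($\Leftarrow$) is a fundamental-theorem-of-calculus argument modeled on the proof of Theorem \ref{GPS3-lem1}, while necessity ($\Rightarrow$) is an explicit construction taking $\phi=f^{-1}$.

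For sufficiency, suppose such a $\phi$ exists, and fix distinct $X_1,X_2\in D$. I would set $X_0=(X_2-X_1)/\|X_2-X_1\|\in S_{n-1}$ and $\lambda=\|X_2-X_1\|>0$, and let $U=U(X_0)$ be the associated unitary matrix. Convexity gives $\gamma(t)=X_1+t(X_2-X_1)\in D$ for $t\in[0,1]$, so the chain rule together with the fundamental theorem of calculus yields
$$\phi(f(X_2))-\phi(f(X_1))=\int_0^1 (D\phi)(f(\gamma(t)))\,Df(\gamma(t))\,(X_2-X_1)\,dt.$$
Multiplying on the left by $U$ and substituting $X_2-X_1=\lambda X_0$ gives
$$U\big[\phi(f(X_2))-\phi(f(X_1))\big]=\lambda\int_0^1 U\,(D\phi)(f(\gamma(t)))\,Df(\gamma(t))\,X_0\,dt.$$
By hypothesis each integrand lies in $\IR^n_+$, i.e. has positive first coordinate; since the componentwise integral of a vector-valued function whose first coordinate is everywhere positive again has positive first coordinate, the right-hand side is a nonzero vector. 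As $U$ is invertible, $\phi(f(X_2))\neq\phi(f(X_1))$, forcing $f(X_2)\neq f(X_1)$. Hence $f$ is injective.

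For necessity, assume $f$ is injective and take $\phi=f^{-1}:\Omega\to\IR^n$. Then $\phi\circ f=\id$ on $D$, so the chain rule gives $(D\phi)(f(X))\,Df(X)=I$ for every $X\in D$. Given $X_0\in S_{n-1}$, I would choose an orthogonal (real unitary) matrix $U=U(X_0)$ with $UX_0=e_1$, which exists because any unit vector can be completed to an orthonormal basis. Then $U\,(D\phi)(f(X))\,Df(X)X_0=UX_0=e_1\in\IR^n_+$ for every $X\in D$, establishing the required property.

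The main obstacle lies in the necessity direction, namely the regularity of the inverse: for the chain-rule computation $\phi=f^{-1}$ must actually belong to $C^1(\Omega)$, and a merely injective $C^1$ map need not have a $C^1$ inverse (for instance $x\mapsto x^3$ in dimension one, whose inverse fails to be differentiable at the origin). This is precisely where the identity $(D\phi)(f(X))\,Df(X)=I$ could break down. In the setting relevant to this paper, however, $Df(X)$ is nonsingular at every point, so $f$ is a local diffeomorphism; combined with global injectivity this makes $f:D\to\Omega$ a $C^1$-diffeomorphism, whence $f^{-1}\in C^1(\Omega)$ by the inverse function theorem and the argument goes through. I would therefore carry out the necessity direction under this nondegeneracy, explicitly indicating where it is invoked.
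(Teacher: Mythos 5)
Your proposal follows essentially the same route as the paper's proof: sufficiency via the identical fundamental-theorem-of-calculus argument (integrate along the segment, note the first coordinate of $U\,(D\phi)(f(\gamma(t)))\,Df(\gamma(t))\,X_0$ is positive, conclude the integral is nonzero), and necessity via the paper's exact choice $\phi=f^{-1}$ together with an orthogonal $U$ taking $X_0$ to $e_1$. The regularity caveat you raise is in fact a genuine gap in the paper's own treatment---the paper simply sets $\phi=f^{-1}$ and asserts $\psi=\phi\circ f\equiv\id$ without justifying $f^{-1}\in C^1(\Omega)$, which is needed because condition \eqref{equ5} is phrased through $(D\phi)(f(X))\,Df(X)$ rather than through $D\psi$; your explicit invocation of nondegeneracy of $Df$ (which is anyway forced by \eqref{equ5}, since a unit null vector $X_0$ of a singular $Df(X)$ would make the left-hand side vanish) makes your version more careful than the original.
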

\begin{proof}
Let $X_1,X_2\in D$.  Then the line segment $\gamma(t)$ connecting these points
given by $\gamma(t)=X_1+t(X_2-X_1)$ belongs to the convex domain $D$ for every $t\in[0,1]$. We denote $\psi=\phi\circ f$ and observe that
$$\psi(X_2)-\psi(X_1)=\int_\gamma d\psi(\zeta )=\int_0^1d(\psi \circ \gamma)(t)=\int_0^1 (D\psi)(\gamma(t))\cdot(X_2-X_1)\,dt.
$$
If $X_2\neq X_2,$ we may let
$$X_0=\frac{X_2-X_1}{||X_2-X_1||} \in S_{n-1}.
$$

Sufficiency ($\Leftarrow$): Now we assume \eqref{equ5} and show that $f$ is injective on $D$. Because of the truth of \eqref{equ5}, it follows that
$$U(\psi(X_2)-\psi(X_1))=\|X_2-X_1\|\int_0^1U(D\psi)(\gamma(t))X_0\,dt\neq 0 .
$$
Then the first component $a_1(t)$ of
$U(D\psi)(\gamma(t))X_0=(a_1(t),\ldots,a_n(t))\in \IR^n_+$,
by definition, satisfies the positivity condition $a_1(t)>0$ for each $t$ and thus,  $\int_0^1a_1(t)\,dt\neq0$.
Consequently, $f(X_2)\neq f(X_1)$ for every $X_1,X_2\, (X_1\neq X_2)$ in $D$.

Necessity ($\Rightarrow $): Assume that $f$ is injective in $D$. Then we may let $\phi=f^{-1}$ and assume that $U$ is an unitary matrix such that
$UX_0=(1,0,\dots,0)$. This implies that $\psi(X)\equiv X$ and thus,
$$U\cdot (D\psi)(\gamma(t))X_0\equiv(1,0,\ldots,0)
$$
and \eqref{equ5} holds.
\end{proof}

It is now appropriate to state a several complex variables analog of Theorem \ref{thm1}. As with standard practice,
for $D\subset \IC^n$, we consider $f\in C^1(D),$ $\Omega=f(D)\subset\IC^n,$ $\phi\in C^1(\Omega),$ $\phi:\,\Omega\longrightarrow\IC^n$,
$Z=(z_1,\ldots,z_n)\in\IC^n,$ $\overline{Z}=(\bar{z}_1,\ldots,\bar{z}_n)$, and $\psi=\phi \circ f$. We frequently, write down these
mappings as functions of the independent complex variables $Z$ and $\overline{Z}$, namely, as $f(Z,\overline{Z})$,
$\phi(W,\overline{W})$ and $\psi(Z,\overline{Z})$. Denote as usual
$$\partial\psi= \frac{\partial(\psi_1,\ldots,\psi_n)}{\partial( {z}_1,\ldots, {z}_n)} =\begin{pmatrix} \ds
\frac{\partial\psi_1}{\partial z_1} &\cdots &\ds \frac{\partial\psi _1}{\partial z_n}\\
\vdots & \cdots &\vdots\\
\ds \frac{\partial\psi_n}{\partial z_1}&\cdots & \ds \frac{\partial\psi_n}{\partial z_n}\\
\end{pmatrix}
~\mbox{and }~\overline{\partial}\psi =\frac{\partial(\psi_1,\ldots,\psi_n)}{\partial(\bar{z}_1,\ldots,\bar{z}_n)}.
$$
At this place it is convenient to use $\partial\psi$ instead of $D\psi$.
Then for $\gamma(t)=Z_1+t(Z_2-Z_1)$, $t\in(0,1)$, we have
\beqq
\psi(Z_2,\overline{Z_2})-\psi(Z_1,\overline{Z_1})&=&\int_\gamma d\psi(Z,\overline{Z})\\
&=&\int_0^1\left[\partial\psi(\gamma(t),\overline{\gamma(t)})\cdot (Z_2-Z_1)
+\overline{\partial}\psi(\gamma(t),\overline{\gamma(t)})\cdot\overline{(Z_2-Z_1)}\right]dt.
\eeqq
Thus, Theorem \ref{thm1} takes the following form.

\begin{thm}\label{thm2}
Let $D\subseteq\IC^n$ be a convex domain, $f:\,D\rightarrow\IC^n$ belong to $C^1(D)$ and $\Omega=f(D)$. Then $f$ is injective in $D$
if and only if there exists a $\phi\in C^1(\Omega),$ $\phi:\,\Omega\rightarrow\IC^n$ satisfying the following property with $\psi=\phi \circ f$:
for every $Z_0\in \IC^n$, $\|Z_0\|=1$, there exists a unitary complex matrix $U=U(Z_0)$ such that for
every $Z\in D,$ one has
$$U\left[\partial\psi(Z,\overline{Z})Z_0+\overline{\partial}\psi (Z,\overline{Z})\overline{Z}_0\right]\in\{Z\in\IC^n:\, {\rm Re}\,z_1>0\}.
$$
\end{thm}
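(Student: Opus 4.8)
The plan is to imitate the proof of Theorem \ref{thm1} almost verbatim, with the single real differential $D\psi$ replaced by the pair of Wirtinger matrices $\partial\psi$ and $\overline{\partial}\psi$, the half-space $\IR^n_+$ replaced by $\{Z\in\IC^n:\,\real z_1>0\}$, and the real chain-rule identity replaced by the complex one displayed immediately above the statement, namely, for $\gamma(t)=Z_1+t(Z_2-Z_1)$,
$$\psi(Z_2,\overline{Z_2})-\psi(Z_1,\overline{Z_1})=\int_0^1\left[\partial\psi(\gamma(t),\overline{\gamma(t)})(Z_2-Z_1)+\overline{\partial}\psi(\gamma(t),\overline{\gamma(t)})\overline{(Z_2-Z_1)}\right]dt,$$
which is valid because convexity of $D$ keeps $\gamma(t)$ in $D$ for all $t\in[0,1]$. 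Everything below rests on this identity.

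To prove sufficiency, I would fix distinct points $Z_1,Z_2\in D$ and set $Z_0=(Z_2-Z_1)/\|Z_2-Z_1\|$, a unit vector, so that $\overline{(Z_2-Z_1)}=\|Z_2-Z_1\|\,\overline{Z_0}$. Pulling the scalar $\|Z_2-Z_1\|$ out of the identity and multiplying on the left by the unitary matrix $U=U(Z_0)$ supplied by the hypothesis yields
$$U\bigl(\psi(Z_2,\overline{Z_2})-\psi(Z_1,\overline{Z_1})\bigr)=\|Z_2-Z_1\|\int_0^1 U\bigl[\partial\psi(\gamma(t),\overline{\gamma(t)})Z_0+\overline{\partial}\psi(\gamma(t),\overline{\gamma(t)})\overline{Z_0}\bigr]dt.$$
By hypothesis, evaluating the bracketed vector at $Z=\gamma(t)\in D$ places it in $\{Z:\,\real z_1>0\}$ for each $t$, so its first coordinate $a_1(t)$ satisfies $\real a_1(t)>0$ pointwise; hence $\real\int_0^1 a_1(t)\,dt=\int_0^1\real a_1(t)\,dt>0$. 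Thus the first coordinate of $U(\psi(Z_2)-\psi(Z_1))$ is nonzero, so this vector is nonzero, and since $U$ is invertible, $\psi(Z_2)\neq\psi(Z_1)$. Because $\psi=\phi\circ f$, the equality $f(Z_1)=f(Z_2)$ would force $\psi(Z_1)=\psi(Z_2)$; therefore $f(Z_1)\neq f(Z_2)$ and $f$ is injective.

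For necessity, I would take $\phi=f^{-1}$ on $\Omega$, so that $\psi=\phi\circ f\equiv\id$, i.e.\ $\psi(Z,\overline{Z})=Z$. Then $\partial\psi$ is the identity matrix and $\overline{\partial}\psi$ is the zero matrix, so the bracketed vector collapses to $Z_0$ for every $Z$; the anti-holomorphic term, the only structural novelty over the real case, disappears. Given a unit vector $Z_0$, I would extend it to an orthonormal basis of $\IC^n$ and let $U=U(Z_0)$ be the unitary matrix whose action sends $Z_0$ to $(1,0,\ldots,0)$, whose first coordinate has real part $1>0$; this establishes the inclusion. The hard part, and the only genuine gap to watch, is the tacit regularity claim $f^{-1}\in C^1(\Omega)$: injectivity together with $f\in C^1$ does not by itself guarantee a $C^1$ inverse, since the Jacobian may degenerate. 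This is exactly the same point left implicit in Theorem \ref{thm1}, and I would address it identically, by restricting attention to the setting where $f$ is a $C^1$ diffeomorphism onto its image, or by invoking nonvanishing of the Jacobian where that is available.
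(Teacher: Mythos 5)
Your proposal is correct and follows essentially the same route as the paper, which proves Theorem \ref{thm2} by transplanting the proof of Theorem \ref{thm1} via the complex chain-rule identity displayed before the statement: the same choice $Z_0=(Z_2-Z_1)/\|Z_2-Z_1\|$ and positivity of the first coordinate for sufficiency, and $\phi=f^{-1}$, $\partial\psi=I$, $\overline{\partial}\psi=0$, with $U$ unitary sending $Z_0$ to $(1,0,\ldots,0)$ for necessity. The $C^1$-regularity of $f^{-1}$ that you flag is indeed left implicit in the paper's own argument as well, so your treatment matches the paper on that point too.
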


In particular, Theorem \ref{thm2}  is applicable to pluriharmonic mappings.
In the case of planar harmonic mappings $f=h+\overline{g}$,
where $h$ and $g$ are analytic in the unit disk $\mathbb{D} := \{z\in \IC:\, |z| < 1 \}$, Theorem \ref{thm2} takes the following
form--another criterion for injectivity--harmonic analog of $\Phi$-like mappings, see \cite[Theorem 1]{GrafSamyVictor2015b}.

\begin{cor}\label{GPS2-th1}
Let $f=h+\overline{g}$ be harmonic on a convex domain $D\subset \IC$ and $\Omega=f(D)$. Then $f$ is univalent in $D$
if and only if there exists a complex-valued function $\phi=\phi(w,\overline{w})$ in $C^1(\Omega)$ and such that for every
$\epsilon$ with $|\epsilon|=1$, there exists a real number $\gamma=\gamma (\epsilon )$ satisfying
$${\rm Re}\,\big \{e^{i\gamma}\big(\partial\phi(f(z),\overline{f(z)})
+\epsilon \overline{\partial}\phi(f(z),\overline{f(z)})\big)\big \}>0 ~\mbox{ for all $z\in D$},
$$

where $\partial=\frac{\partial}{\partial z}$ and $\overline{\partial}=\frac{\partial}{\partial \overline{z}}$.
\end{cor}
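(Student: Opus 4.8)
The plan is to read the statement off as the one-dimensional, harmonic specialization of Theorem~\ref{thm2}. When $n=1$ a unit vector $Z_0$ is just a number $\epsilon$ with $|\epsilon|=1$, a $1\times 1$ unitary matrix is a rotation $U=e^{i\gamma}$ with $\gamma\in\IR$, and the half-space $\{\real z_1>0\}$ is the right half-plane. Since $f=h+\overline{g}$ with $h,g$ analytic on the convex domain $D$, the map $f$ is smooth and satisfies $f_z=h'$, $f_{\overline z}=\overline{g'}$. So the first step is to compute, for $\psi=\phi\circ f$ and via the Wirtinger chain rule,
$$\partial\psi=(\partial\phi)\,h'+(\overline\partial\phi)\,g',\qquad \overline\partial\psi=(\partial\phi)\,\overline{g'}+(\overline\partial\phi)\,\overline{h'},$$
where $\partial\phi$ and $\overline\partial\phi$ are evaluated at $(f(z),\overline{f(z)})$. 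Substituting these into the bracket of Theorem~\ref{thm2} yields
$$\partial\psi\,\epsilon+\overline\partial\psi\,\overline\epsilon=(\partial\phi)\,a+(\overline\partial\phi)\,\overline a,\qquad a:=h'\epsilon+\overline{g'}\,\overline\epsilon,$$
the crucial bookkeeping being that the coefficient $g'\epsilon+\overline{h'}\,\overline\epsilon$ of $\overline\partial\phi$ is precisely $\overline a$.

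With this identity the formal reduction is transparent: whenever $a\neq0$ one factors $(\partial\phi)\,a+(\overline\partial\phi)\,\overline a=a\big(\partial\phi+(\overline a/a)\,\overline\partial\phi\big)$, with $|\overline a/a|=1$. Writing the rotation realising the half-plane condition as $e^{i\gamma}$ and absorbing the unimodular factor $a/|a|$ into it, the requirement $\real\{e^{i\gamma}(\partial\psi\,\epsilon+\overline\partial\psi\,\overline\epsilon)\}>0$ becomes $\real\{e^{i\gamma}(\partial\phi+\epsilon'\,\overline\partial\phi)\}>0$ with $\epsilon'=\overline a/a$. As $\epsilon$ traverses the unit circle so does $\epsilon'$ (the correspondence $\epsilon\mapsto\overline a/a$ is a Blaschke map and covers the whole circle when $f$ is sense-preserving, $|g'|<|h'|$), so the universally quantified parameter of the statement is matched. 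In the necessity direction I would \emph{not} take $\phi=f^{-1}$: that choice gives $\psi=\mathrm{id}$, so it satisfies Theorem~\ref{thm2}, but the resulting $\partial\phi=\overline{h'}/J$, $\overline\partial\phi=-\overline{g'}/J$ (with $J=|h'|^2-|g'|^2$) fail the displayed inequality in general; instead one must construct a potential $\phi$ adapted to $f$, which is the content of the $\Phi$-like description recalled from \cite{GrafSamyVictor2015b}.

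The hard part is exactly the step glossed over above: the quantity $a$, hence both $\epsilon'=\overline a/a$ and the absorbed phase $\arg a$, depends on the base point $z$, whereas the rotation $\gamma=\gamma(\epsilon)$ in the statement may depend only on $\epsilon$. Thus the substitution is not a purely formal rewriting of Theorem~\ref{thm2}, and the genuine work is to produce a single $z$-independent $\gamma(\epsilon)$. For sufficiency I would reduce injectivity of $f$ to that of $\psi=\phi\circ f$ (which is automatic, since $f(z_1)=f(z_2)$ forces $\psi(z_1)=\psi(z_2)$) and then run the segment integration of Theorem~\ref{thm1} on $\psi$, using the displayed positivity to keep the integrand in a fixed half-plane. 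Controlling the varying argument of $\mu(t)=h'(\sigma(t))(z_2-z_1)+\overline{g'(\sigma(t))}\,\overline{(z_2-z_1)}$, $\sigma(t)=z_1+t(z_2-z_1)$, along the chord is the crux, and it is precisely this uniformity that the harmonic $\Phi$-like argument of \cite{GrafSamyVictor2015b} supplies.
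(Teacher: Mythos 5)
Your proposal founders on a misreading of the Corollary's notation, and as a result it manufactures a difficulty that is not there while delegating the actual proof to a citation. The closing clause ``where $\partial=\frac{\partial}{\partial z}$ and $\overline{\partial}=\frac{\partial}{\partial \overline{z}}$'' means that $\partial\phi(f(z),\overline{f(z)})$ and $\overline{\partial}\phi(f(z),\overline{f(z)})$ are the $z$- and $\overline{z}$-derivatives of the \emph{composition} $\psi=\phi\circ f$, i.e.\ they are $\partial\psi(z)$ and $\overline{\partial}\psi(z)$ --- precisely the objects occurring in Theorem \ref{thm2}. You instead read them as $\phi_w$ and $\phi_{\overline{w}}$ evaluated at $w=f(z)$, which is why you ran the Wirtinger chain rule through $f$ and produced the $z$-dependent factor $a=h'\epsilon+\overline{g'}\,\overline{\epsilon}$ whose argument cannot be absorbed into a constant rotation. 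Under the correct reading no chain rule through $f$ is needed and no $z$-dependent phase appears: Theorem \ref{thm2} with $n=1$ says $f$ is injective iff there is $\phi$ such that for every unimodular $\epsilon$ there is $\gamma$ with $\real\{e^{i\gamma}(\partial\psi(z)\,\epsilon+\overline{\partial}\psi(z)\,\overline{\epsilon})\}>0$ on $D$; since
$$\partial\psi\,\epsilon+\overline{\partial}\psi\,\overline{\epsilon}
=\epsilon\left(\partial\psi+\overline{\epsilon}^{\,2}\,\overline{\partial}\psi\right),
$$
and $\epsilon\mapsto\overline{\epsilon}^{\,2}$ maps the unit circle onto itself, the substitution $\epsilon'=\overline{\epsilon}^{\,2}$, $\gamma'=\gamma+\arg\epsilon$ (a \emph{constant} shift, because $\epsilon$ does not depend on $z$) converts the Theorem \ref{thm2} condition exactly into the displayed condition of the Corollary, and the same identity read backwards gives the converse. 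In particular, necessity is immediate with $\phi=f^{-1}$: then $\psi=\id$, the bracket is identically $1$, and $\gamma=0$ works for every $\epsilon$ --- contrary to your claim that this choice fails. This one-line reparametrization is the paper's entire derivation (harmonicity of $f$ is not even used in it).

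Two further points pin down the gap. First, under your interpretation the statement would actually be false: the displayed condition would constrain $\phi_w,\phi_{\overline{w}}$ only at points of $\Omega=f(D)$, hence would depend on $f$ only through its image; on $D=\ID$ the choice $\phi(w)=w$ would then certify univalence simultaneously for $f_1(z)=z$ and $f_2(z)=z^2$, both of which map $\ID$ onto $\ID$, yet $z^2$ is not univalent. So no correct proof can proceed from that reading, and the fact that your reduction hit an impassable uniformity problem was the symptom. Second, even granting your reading, what you wrote is not a proof: both the sufficiency (the ``uniformity'' you say the argument of \cite{GrafSamyVictor2015b} supplies) and the necessity (the construction of a ``potential $\phi$ adapted to $f$'') are deferred to the cited paper rather than argued. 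The statement is a corollary of Theorem \ref{thm2}, and its proof should be, and is, the formal specialization above.
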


Several consequences and examples of Corollary \ref{GPS2-th1} are discussed in \cite{GrafSamyVictor2015b} and they seem to be very useful.
Another univalence criterion for harmonic mappings of $\ID=\{z\in \IC:\, |z|<1\}$ was obtained in \cite{star2014}.
Moreover, using Corollary \ref{GPS2-th1}, it is easy to obtain the following sufficient condition for
the univalency of $C^1$ functions.

\bcor\label{GPS2-cor1}
{\rm (\cite{Mocanu80})}
Let $D$ be a convex domain in $\IC$ and $f$ be a complex-valued function of class $C^1(D)$. Then $f$ is univalent in $D$
if there exists a real number $\gamma$ such that
$$
{\rm Re}\,\big (e^{i\gamma} f_z(z) \big )> \big |f_{\overline{z}}(z)\big |  ~\mbox{ for all $z\in D$}.
$$
\ecor

For example, if $f=h+\overline{g}$ is a planar harmonic mapping in the unit disk $\ID$ and if there exists a real number $\gamma$ such that
\be\label{GPS2-eq2}
{\rm Re}\,\big \{e^{i\gamma} h'(z)\}>|g'(z)|~ \mbox{for $z\in \ID$,}
\ee
then $f$ is univalent in $\ID$. In \cite{Hiroshi-Samy-2010}, it was shown that harmonic functions $f=h+\overline{g}$ satisfying the
condition \eqref{GPS2-eq2} in $\ID$ are indeed univalent and \textit{close-to-convex} in $\ID$, i.e., the complement of the image-region $f(\ID)$
is the union of non-intersecting rays (except that the origin of one ray may lie on another one of the rays).

Moreover, using Theorem \ref{thm2}, one can also obtain a sufficient condition for $p$-valent mappings.

\begin{cor}\label{cor4}
Suppose that $D\subseteq\IR^n$ is a domain such that $D=\cup_{m=1}^pD_m,$ where $D_m$'s are convex for $m=1,\ldots,p$.
Furthermore, let $f\in C^1(D),$ $\Omega=f(D)\subset \IR^n$, $\phi\in C^1(\Omega)$ such that for every $X_m\in S_{n-1}$ there exists a
unitary matrix $U_m:=U_m(X_m)$ for each $m=1,\ldots,p,$ such that
$$U_m\cdot (D\phi)(f(X))\cdot (Df)(X)\cdot X_m\in\IR^n_{+}
$$
for every $X\in D_m,$ $m=1,\ldots,p.$  Then $f$ is no more than $p$-valent in $D$.
\end{cor}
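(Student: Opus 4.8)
The plan is to reduce Corollary \ref{cor4} to a $p$-fold application of the injectivity criterion in Theorem \ref{thm1}. The governing observation is that the hypotheses on $D_m$ are precisely those of Theorem \ref{thm1} applied to the restriction $f|_{D_m}$: each $D_m$ is convex, $f\in C^1(D_m)$, and the displayed condition $U_m\cdot(D\phi)(f(X))\cdot(Df)(X)\cdot X_m\in\IR^n_+$ for all $X\in D_m$ is exactly condition \eqref{equ5} for $f|_{D_m}$ with the witnessing function $\phi$ (the same $\phi$ works on all pieces). Hence by Theorem \ref{thm1}, $f$ is injective on each convex piece $D_m$.

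With injectivity on each $D_m$ in hand, I would argue the global $p$-valence combinatorially. Suppose for contradiction that some point $Y\in\Omega$ has at least $p+1$ distinct preimages $X^{(1)},\ldots,X^{(p+1)}\in D$ under $f$. Since $D=\bigcup_{m=1}^p D_m$, each preimage lies in at least one of the $p$ sets $D_m$, so by the pigeonhole principle two distinct preimages, say $X^{(i)}$ and $X^{(j)}$ with $i\neq j$, lie in the same piece $D_{m_0}$. But $f$ is injective on $D_{m_0}$, forcing $X^{(i)}=X^{(j)}$, a contradiction. Therefore every point of $\Omega$ has at most $p$ preimages, which is precisely the assertion that $f$ is no more than $p$-valent in $D$.

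The argument is essentially routine once Theorem \ref{thm1} is invoked correctly, so there is no deep obstacle; the only point requiring care is checking that the hypothesis of Theorem \ref{thm1} is genuinely met piece by piece. Specifically, one must verify that the \emph{same} map $\phi\in C^1(\Omega)$ serves as the auxiliary function for every $D_m$ simultaneously, and that for each fixed $m$ the matrix $U_m$ is allowed to depend on both $m$ and the unit vector $X_m\in S_{n-1}$, matching the quantifier structure ``for every $X_m\in S_{n-1}$ there exists a unitary $U_m$'' in Theorem \ref{thm1}. Since the statement of Corollary \ref{cor4} supplies exactly this data, the verification is immediate, and I would present the pigeonhole step as the substantive content. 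I would also remark that the convexity of the individual $D_m$ is what lets us use line segments inside each piece, even though $D$ itself need not be convex.
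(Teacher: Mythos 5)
Your proof is correct and is exactly the argument the paper intends: the paper states Corollary \ref{cor4} without proof as an immediate consequence of the injectivity criterion (Theorem \ref{thm1}, cited there in its complex form as Theorem \ref{thm2}), applied to each convex piece $D_m$, with the pigeonhole step left to the reader. Your verification that the hypothesis restricts correctly to each $D_m$ with the same $\phi$, followed by the pigeonhole argument, fills in precisely those implicit details.
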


\section{Proofs of Theorems \ref{cor4a}, \ref{thm5} and \ref{thm1a}} \label{sec3}

\begin{proof}[\bf Proof of Theorem \ref{cor4a}]
It is enough to prove the Theorem in the case  $A=B=I$.
For convenience, we let $z=x_1+\cdots +x_n$. Then
$$u_k(X)=x_k+\gamma_k\left[\alpha_2 z^2+\alpha_3 z^3+\cdots+\alpha_m z^m\right]
$$
for $k=1,\ldots,n$, $\alpha_j,\gamma_k\in\IR$ with $\sum_{k=1}^n\gamma_k=0$ and $X=(x_1,\ldots, x_n)$.

We first prove that $\det Df(X)\equiv 1.$ To do this, we begin to introduce
$$L(X)=2\alpha_2 z +3\alpha_3 z^2+\cdots+m\alpha_m z^{m-1}.
$$
Then a computation gives
$$\frac{\partial u_k}{\partial x_j}=\delta^j_k+\gamma_kL.
$$
and
$$\det Df=
\begin{vmatrix}1+\gamma_1L &\gamma_1L &\ldots &\gamma_1L\\
\gamma_2L & 1+\gamma_2L &\ldots &\gamma_2L\\
\vdots & \vdots &\vdots &\vdots\\
\gamma_nL&\gamma_nL&\ldots&1+\gamma_nL\end{vmatrix} =I_1+\gamma_1L I_2,
$$
where
$$I_1 = \begin{vmatrix}1&0&\ldots&0\\ \gamma_2L&1+\gamma_2L&\ldots&\gamma_2L\\
\vdots & \vdots &\vdots &\vdots\\
\gamma_nL&\gamma_nL&\ldots&1+\gamma_nL\end{vmatrix}
~\mbox{ and }~ I_2=\begin{vmatrix}1&1&\ldots&1\\
 \gamma_2L&1+\gamma_2L&\ldots&\gamma_2L\\
\vdots & \vdots &\vdots &\vdots\\
  \gamma_nL&\gamma_nL&\ldots&1+\gamma_nL\\\end{vmatrix}.
$$
We will now show by induction that
$$\det Df(X)=1+\left (\sum_{k=1}^{n}\gamma_k\right )L(X).
$$
Obviously, for $n=2$,  we have $\det Df(X)=1+(\gamma_1+\gamma_2)L$  for $\gamma_1,\gamma_2\in\IR$.

Next, we suppose that $\det Df(X)=1+(\gamma_1+\cdots+\gamma_{p})L$ holds for $p=n-1$, where
$\gamma_1,\ldots,\gamma_{p}\in\IR$. We need to show that it is true for $p=n$. Clearly,
$$I_1=\begin{vmatrix}1+\gamma _2 L&\ldots&\gamma_2 L\\
\vdots & \vdots &\vdots\\
\gamma_nL&\ldots&1+\gamma_nL \end{vmatrix}=1+(\gamma_2+\cdots+\gamma_n)L
$$
and
$$I_2=\begin{vmatrix}1&1&1&1&\ldots&1\\
0&1& 0&0&\ldots&0 \\
0& 0 &1 &0&\ldots &0\\
\vdots & \vdots &\vdots &\vdots &\vdots &\vdots\\
 0&0& 0& 0&\ldots &1\end{vmatrix}=1.
$$
Since $\det Df=I_1+\gamma_1L I_2$, using the above and the hypothesis $\sum_{k=1}^n\gamma_k=0$ that
$$\det Df(X)=1+(\gamma_1+\gamma_2+\cdots+\gamma_n)L(X)\equiv 1.
$$

Applying Theorem \ref{GPS3-lem1}, we will finally show that $f$ is indeed a univalent mapping.
Now, for convenience, we denote $L_*=\int_0^1L[\gamma(t)]\,dt$ and obtain that
\beqq
\det \left (\int_0^1\frac{\partial u_k}{\partial x_j}(\gamma (t) )\,dt\right )^n_{j,k=1} &= &
\begin{vmatrix}1+\gamma_1 L_*&\gamma_1 L_*&\ldots&\gamma_1 L_* \\
\gamma_2 L_*&1+\gamma_2 L_*&\ldots&\gamma_2 L_*\\
\vdots&\vdots&\vdots&\vdots\\
\gamma_n L_*&\gamma_n L_*&\ldots&1+\gamma_n L_*\\
\end{vmatrix}\\
&=& 1+(\gamma_1+\cdots+\gamma_n)L_*\\
&=&1
\eeqq
for all $\gamma(t)$ as in Theorem \ref{GPS3-lem1}. Thus, by Theorem \ref{GPS3-lem1}, $f$ is univalent in $\IR^n.$
\end{proof}

\begin{proof}[\bf Proof of Theorem \ref{thm5}]
Consider $f(X)=(u_1(X),u_2(X))$, where $X=(x,y)\in \IR^2$ and
$$u_1(x,y)=x+\left[\alpha_2(x+y)^2+\alpha_3(x+y)^3+\cdots+\alpha_m(x+y)^m\right]
$$
and
$$u_2(x,y)=y-\left[\alpha_2(x+y)^2+\alpha_3(x+y)^3+\cdots+\alpha_m(x+y)^m\right]
$$
for $\alpha_j\in\IR$, $j=2, \ldots n$. Let
$$\tilde{f}(X)=(u_1(X)+W(X),u_2(X)+w(X)),
$$
where $W$ and $w$ are as mentioned in the statement.

As in the proof of Theorem \ref{cor4a}, we see easily that
\beq \label{GPS2-eq3}
\nonumber \det D{\tilde{f}} & =&
\begin{vmatrix}1+L +W_x& L +W_y\\
-L +w_x& 1-L +w_y
\end{vmatrix}\\[2mm]
&= & 1+(1+ L)w_y+(1- L)W_x +LW_y - Lw_x+(W_xw_y-w_xW_y)
\eeq
which is identically $1$, by the hypothesis of Theorem \ref{thm5}.
Moreover, allowing $\|X\|\ra \infty$ in \eqref{GPS2-eq3}, it follows that $W_xw_y-w_xW_y=0$.
We now show that this gives the relation
$$w=\lambda W
$$
for some constant $\lambda$.
We observe that both $w$ and $W$ are not identically zero simultaneously. If $w\equiv 0$ and $W\not\equiv 0$, then we choose $\lambda =0$.
Because of the symmetry, equality holds in the last relation when $W\equiv 0$ and $w\not\equiv 0$. Therefore, it suffices to consider the case
$W\not\equiv 0 \not\equiv w$.
We denote $t=x/y$.  Using the definition of $w(x)$ and $W(X)$ in the statement, we may conveniently write
$$
w(X)=\sum_{k=0}^{m+1}\alpha_{k}x^ky^{m+1-k} = y^{m+1}\sum_{k=0}^{m+1}\alpha_{k}t^k =:  y^{m+1}p(t)
$$
and similarly,
\be\label{GPS2-eq5}
W(X)=\sum_{k=0}^{m+1}\beta_{k}x^ky^{m+1-k}=:  y^{m+1}q(t).
\ee
Using these, we find that
\beqq
w_x(X)&=&\sum_{k=1}^{m+1}k\alpha_{k}x^{k-1}y^{m+1-k}=y^{m}p'(t), ~\mbox{ and }\\
w_y(X)&=& \sum_{k=0}^{m}(m+1-k)\alpha_{k}x^ky^{m-k} 
=  y^{m}((m+1)p(t)-tp'(t)).
\eeqq
Similarly, we see that
$$W_x(X)= y^{m}q'(t)  ~\mbox{and }~W_y(X)= y^{m}((m+1)q(t)-tq'(t)).
$$
Then
\beqq
\frac{W_y(X)}{W_x(X)}= \frac{w_y(X)}{w_x(X)}
& \Longleftrightarrow &\frac{(m+1)q(t)-tq'(t)}{q'(t)} =\frac{(m+1)p(t)-tp'(t)}{p'(t)}\\
& \Longleftrightarrow & \frac{q'(t)}{q(t)} =\frac{p'(t)}{p(t)},
\eeqq
and the last relation, by integration, gives $q(t)=\lambda p(t)$ for some constant $\lambda$. Thus, we have
the desired claim $w=\lambda W$. Consequently, by \eqref{GPS2-eq3}, $\det D{\tilde{f}}(X)\equiv 1$ implies that
$$(1+ L)w_y+(1- L)W_x + LW_y - Lw_x\equiv 0
$$
which, by the relation $w=\lambda W$, becomes
\be \label{GPS2-eq4}
(\lambda + L\lambda + L)W_y+(1- L- L\lambda )W_x \equiv 0.
\ee
Allowing $\|X\|\ra 0$ in \eqref{GPS2-eq4}, we see that $\lambda W_y(X)+W_x(X)=0$ is equivalent to
$$\sum_{k=1}^{m+1}[\lambda (m+1-(k-1))\beta _{k-1}+k\beta _{k}]x^{k-1}y^{m+1-k} =0.
$$
This gives the condition $\lambda (m+2-k)\beta _{k-1}+k\beta _{k}=0$ for $k=1, \ldots, m+1$. From the last relation, it follows easily that
$$\beta _k=\frac{(-\lambda )^k (m+1)!}{k!(m+1-k)!}\beta _0 ~\mbox{ for $k=1, \ldots, m+1$}
$$
and thus, using this and  \eqref{GPS2-eq5}, we obtain that
$$W(X)=\beta_0(y-\lambda x)^{m+1}.
$$
Since
$$W_x(X)=-\lambda (m+1)\beta_0(y-\lambda x)^{m} ~\mbox{ and }~ W_y(X)=(m+1)\beta_0(y-\lambda x)^{m},
$$
allowing $\|X\|\ra \infty$ in \eqref{GPS2-eq4} (and making use of the expression $L$ in the proof of Theorem \ref{cor4a} for $n=2$), we have in case $L\not\equiv 0$ and
$\alpha _m \neq 0:$
$$\alpha_{m}(1+\lambda)^{2}=0,
$$
which gives the condition $\lambda =-1$.
Note that if $\alpha _m =0$, we choose the leading largest $j$ for which $\alpha _j\neq 0$. However, the last condition gives that
$\lambda =-1$. 
Consequently, we end up with the forms
$$W(X)=\beta_0(x+y)^{m+1}=\alpha_{m+1}(x+y)^{m+1}
$$
and
$$w(X)=-\beta_0(x+y)^{m+1}=-\alpha_{m+1}(x+y)^{m+1}.
$$

If $L\equiv 0$, then
$$\tilde{f}(X)=(x+\beta_0(y-\lambda x)^{m+1},y+\lambda \beta_0(y-\lambda x)^{m+1}).
$$
If at the same time $\lambda \neq 0$, then we denote
$$A= \left(\begin{array}{cccc}
\ds -\lambda  &  0 \\
\ds 0 & 1
\end{array}\right), ~ F(X)=A\tilde{f}(A^{-1}(X)) ~\mbox{ and }~ \alpha_{m+1}=-\lambda\beta_0.
$$
Thus, we have  $\tilde{f}=A^{-1} \circ F\circ A$ and
\be\label{PSJC-eq1}
F(X)=(x+\alpha_{m+1}(x+y)^{m+1},y -\alpha_{m+1}(x+y)^{m+1}).
\ee

If $L\equiv 0$ and $\lambda =0$, then $w \equiv 0$ and $W(X)=\beta_0y^{m+1}$ so that
$$\tilde{f}(X)=(x+\beta_0y^{m+1},y).
$$
Now,  we denote
$$A= \left(\begin{array}{cccc}
\ds 1  &  0 \\
\ds -1 & 1
\end{array}\right) ~ \mbox{ and }~ \alpha_{m+1}=\beta_0.
$$
This gives $\tilde{f}=A^{-1} \circ F\circ A$ and $F$ has the form \eqref{PSJC-eq1}.
The proof is complete.
\end{proof}

Proof of Theorem \ref{thm1a} requires some preparation.

\begin{lem}\label{lem2-new}
Suppose that  $G_j\in P_n(m)$ for  $j=1,\ldots,N$ and  $G_j(X)=X+u^{(j)}(X)\gamma^{(j)}$,  where  $u^{(j)}(X)=\sum_{l=2}^m a_l^{(j)}z^l$
with $z=x_1+\cdots +x_n$,  $\gamma^{(j)}=(\gamma^{(j)}_1,\ldots,\gamma^{(j)}_n)$,  $\sum_{k=1}^n\gamma^{(j)}_k=0$ for all $j$, and $a_l^{(j)}$ are
some constants. Then $G_1\circ \cdots \circ G_N=:F\in\mathcal{P}_n(m)$, and
$$F(X)=X+u^{(1)}(X)\gamma^{(1)}+\cdots +u^{(N)}(X)\gamma^{(N)}.
$$
\end{lem}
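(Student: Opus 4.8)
The plan is to exploit the one structural feature shared by all the maps $G_j$: the linear form $\sigma(X):=x_1+\cdots+x_n$ is invariant under each $G_j$. Indeed, writing $G_j(X)=X+u^{(j)}(X)\gamma^{(j)}$ and summing the $n$ coordinates gives $\sigma(G_j(X))=\sigma(X)+u^{(j)}(X)\sum_{k=1}^n\gamma^{(j)}_k=\sigma(X)$, because $\sum_{k=1}^n\gamma^{(j)}_k=0$. Since each $u^{(j)}=\sum_{l=2}^m a_l^{(j)}\sigma^l$ is a polynomial in $\sigma$ alone, this invariance means that $u^{(j)}$ acts as a constant of the motion along the successive compositions, and it is this observation that drives the entire argument.

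First I would establish the displayed formula for $F$ by downward induction on the partial compositions $F_k:=G_k\circ\cdots\circ G_N$ (with $F_{N+1}=\id$), proving simultaneously the two statements $\sigma(F_k(X))=\sigma(X)$ and $F_k(X)=X+\sum_{j=k}^N u^{(j)}(X)\gamma^{(j)}$. The inductive step is short: assuming the claim for $F_{k+1}$, one has $F_k(X)=G_k(F_{k+1}(X))=F_{k+1}(X)+u^{(k)}(F_{k+1}(X))\gamma^{(k)}$, and since $u^{(k)}$ depends on its argument only through $\sigma$, the invariance $\sigma(F_{k+1}(X))=\sigma(X)$ yields $u^{(k)}(F_{k+1}(X))=u^{(k)}(X)$. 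Substituting the inductive expression for $F_{k+1}$ then gives the claim for $F_k$, and the identity $\sigma(F_k(X))=\sigma(X)$ follows in turn from $\sum_{k}\gamma^{(j)}_k=0$. Taking $k=1$ produces the asserted formula $F(X)=X+u^{(1)}(X)\gamma^{(1)}+\cdots+u^{(N)}(X)\gamma^{(N)}$.

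With the explicit formula in hand, membership $F\in\widehat{\mathcal{P}}_n(m)$ is read off directly: each summand $u^{(j)}(X)\gamma^{(j)}$ is a polynomial of degree at most $m$ whose lowest-order term is quadratic (the sum over $l$ starts at $l=2$), so $F$ is polynomial of degree $\le m$, $F(0)=0$, and $DF(0)=I$. Injectivity is the easy half: each $G_j\in P_n(m)$ is univalent by Theorem \ref{cor4a}, and a composition of injective maps is injective, so $F$ is injective and hence $F\in\mathcal{P}_n(m)$. The one point deserving care — and the reason the statement is not merely routine — is the degree count: a priori the composition of $N$ maps of degree $m$ could have degree as large as $m^N$, and what prevents this blow-up is precisely the invariance of $\sigma$, which forces the higher-order contributions to telescope rather than compound. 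Thus the genuine content of the proof is the invariance observation together with the verification that it propagates through the induction; everything else is bookkeeping.
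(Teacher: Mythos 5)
Your proof is correct and rests on exactly the key observation the paper uses: the invariance of $z=x_1+\cdots+x_n$ under each $G_j$ (a consequence of $\sum_{k=1}^n\gamma^{(j)}_k=0$), which gives $u^{(k)}(F_{k+1}(X))=u^{(k)}(X)$ and makes the composition formula collapse, with injectivity then inherited from Theorem \ref{cor4a} and the fact that compositions of injective maps are injective. The only difference is presentational: the paper treats $N=2$ and $N=3$ explicitly (including Jacobian matrix products that the statement does not actually require) and then asserts that the process continues, whereas your downward induction on the partial compositions packages the same argument completely and somewhat more cleanly.
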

\begin{proof}

At first we would like to prove the lemma for $N=2$  and then extend it for the composition of $N$ mappings, $N\geq 2$.
Let $G_j\in P_n(m)$ for  $j=1,2$ and
$$G_j(X)=X+u^{(j)}(X)\gamma^{(j)}.
$$
Also, introduce
$$G_2(X)=(g^{(2)}_1,g^{(2)}_2, \ldots, g^{(2)}_n)
~\mbox{ and }~L_p(X)=\sum_{l=2}^m la_l^{(p)}z^{l-1}~\mbox{ for $p=1,2.$}
$$
We observe that
$$g^{(2)}_1(X)+  \cdots + g^{(2)}_n(X) =z + \sum_{l=2}^m \left [a_l^{(2)} \sum_{k=1}^n \gamma^{(2)}_k z^{l} \right ] =z
$$
and therefore,
$$L_1(G_2(X))= L_1(X) ~\mbox{ and }~D{G_1}(G_2(X))=D{G_1}(X).
$$
Finally, for the case of $N=2$, one has
$$DF(X)= D{G_1}(G_2(X))D{G_2}(X)= D{G_1}(X)D{G_2}(X)=A^{(1)}(L_1)A^{(2)}(L_2) =( r_{ij})_{n\times n},
$$
where $A^{(p)}(L_p)$ for $p=1,2$ are the two $n\times n$ matrices given by
$$A^{(p)}(L_p) = \left( {\begin{array}{cccc}
1+\gamma^{(p)}_1L_p & \gamma^{(p)}_1L_p  & \cdots & \gamma^{(p)}_1L_p\\
\gamma^{(p)}_2L_p & 1+\gamma^{(p)}_2L_p & \cdots & \gamma^{(p)}_2L_p\\
\vdots & \vdots & \vdots & \vdots \\
\gamma^{(p)}_nL_p & \gamma^{(p)}_nL_p & \cdots & 1+\gamma^{(p)}_nL_p\\
\end{array} } \right), \quad p=1,2.
$$


If $i\neq j$, then a computation gives
$$r_{ij}= \gamma_i^{(1)}L_1L_2\sum_{k=1}^n \gamma_k^{(2)}+\gamma_i^{(1)}L_1+\gamma_i^{(2)}L_2 = \gamma_i^{(1)}L_1+\gamma_i^{(2)}L_2 ;
$$
Similarly, for $i=j$, we have
$$r_{ii}= \gamma_i^{(1)}L_1L_2\sum_{k=1}^n \gamma_k^{(2)}+1+\gamma_i^{(1)}L_1+\gamma_i^{(2)}L_2=1+\gamma_i^{(1)}L_1+\gamma_i^{(2)}L_2.
$$
Consequently, we obtain that
$$DF(X)= \left( {\begin{array}{cccc}
   (1+\gamma^{(1)}_1L_1+ \gamma^{(2)}_1L_2) & \gamma^{(1)}_1L_1+ \gamma^{(2)}_1L_2  & \cdots & \gamma^{(1)}_1L_1+ \gamma^{(2)}_1L_2\\[2mm]
   \gamma^{(1)}_2L_1+\gamma^{(2)}_2L_2 & (1+\gamma^{(1)}_2L_1+\gamma^{(2)}_2L_2) & \cdots & \gamma^{(1)}_2L_1+\gamma^{(2)}_2L_2\\[2mm]
   \vdots & \vdots & \vdots & \vdots \\[2mm]
   \gamma^{(1)}_nL_1+\gamma^{(2)}_nL_2 & \gamma^{(1)}_nL_1+\gamma^{(2)}_nL_2 & \cdots & (1+\gamma^{(1)}_nL_1+\gamma^{(2)}_nL_2)\\
  \end{array} } \right).
$$
Moreover, it follows easily that
\begin{align*}
F(X)&=(G_1\circ G_2)(X)= \left( {\begin{array}{cc}
   x_1+\gamma^{(1)}_1u^{(1)}(X)+\gamma^{(2)}_1u^{(2)}(X) \\[2mm]
   x_2+\gamma^{(1)}_2u^{(1)}(X)+\gamma^{(2)}_2u^{(2)}(X) \\[2mm]
     \vdots  \\[2mm]
   x_n+\gamma^{(1)}_nu^{(1)}(X)+\gamma^{(2)}_nu^{(2)}(X)  \\[2mm]
  \end{array} } \right)  \\[2mm]
  & = X+u^{(1)}(X)\gamma^{(1)}+u^{(2)}(X)\gamma^{(2)}.
\end{align*}
Similarly in the case of the composition of three mappings $G_1$, $G_2$ and $G_3$ (i.e. for the case of $N=3$),
the Jacobian matrix of $F=G_1\circ G_2\circ G_3$ is given by
\beqq
DF(X)& =& D{G_1}((G_2\circ G_3)(X))D{G_2}(G_3(X))D{G_3}(X)\\
&=& D{G_1} (X ) D{G_2} (X) D{G_3} (X) \\
&=&( r_{ij})_{n\times n},
\eeqq
where,  for $i\neq j$,
$$r_{ij}=\gamma_i^{(1)}L_1+\gamma_i^{(2)}L_2+\gamma_i^{(3)}L_3, ~ L_3(X)=\sum_{l=2}^m la_l^{(3)}z^{l-1};
$$
and
$$r_{ii}=1+\gamma_i^{(1)}L_1+\gamma_i^{(2)}L_2+\gamma_i^{(3)}L_3.
$$
Moreover, we find that
$$F(X)=(G_1\circ G_2\circ G_3)(X)= X+u^{(1)}(X)\gamma^{(1)}+u^{(2)}(X)\gamma^{(2)}+u^{(3)}(X)\gamma^{(3)}.
$$
The above process may be continued to complete the proof.
\end{proof}

\br
We remark that the lemma is of interest only when the dimension of the space is greater than or equal to $3$.
For $n=2$, it does not give anything new in comparison with Theorem \ref{cor4a}.
However, for $n>2$, we obtain from Lemma \ref{lem2-new} new mappings from $\mathcal{P}_n(m)$, not belonging
to $P_n (m)$.
\er

\begin{example}
 Let $m=3=n$, $N=2$, $a_l^{(1)}=l-1$, $a_l^{(2)}=2+l$, $\gamma^{(1)}=(1,2,-3)$,
$\gamma^{(2)}=(-3,1,2)$ and apply Lemma \ref{lem2-new} for the mappings $G_1,G_2\in P_3(3)$.

Define  $F(X)=(G_1\circ  G_2)(X)=(u_1(X),u_2(X),u_3(X))$, where $X=(x_1,x_2,x_3)$, $z=x_1+x_2+x_3$. Since
\beqq
F(X)& =& X+u^{(1)}(X)\gamma^{(1)}+u^{(2)}(X)\gamma^{(2)}\\
& =& X+ (z^2+2z^3)(1,2,-3)+(4z^2+5z^3)(-3,1,2),
\eeqq
a comparison gives
$$u_1(X)=x_1-11z^2-13z^3,~ u_2(X)=x_2+6z^2+9z^3 ~\mbox{ and }~u_3(X)=x_3+5z^2+4z^3.
$$
According to Lemma \ref{lem2-new}, we obtain that  $F\in \mathcal{P}_3(3)$. We now show that  $F\notin P_3(3)$. For its proof,
it is enough to show that there are no such vectors $\Gamma=(\Gamma_1,\Gamma_2)$
and $A=(A_1,A_2)$ from $\IR^2$ that
$$\Gamma_1 A=(-11,-13),~ \Gamma_2 A=(6,9)~\mbox{ and }~ (-\Gamma_1-\Gamma_2)A=(5,4).
$$
It is easy to see that the above system of equations has no solution which means that  $F\notin P_3(3)$. \hfill{$\Box$}
\end{example}

Now, we prove our final result which offers functions from $\mathcal{P}_n(m)$ and generalizes Theorem \ref{cor4a} significantly
in a natural way.

\begin{proof}[\bf Proof of Theorem \ref{thm1a}]
The idea of the proof is in presenting $f$ as a composition of mappings $G_j\in P_n(m)$ for $j=1,\ldots,N$.
For $G_j$, we may write
$G_j(X)=(u_1^{(j)},\ldots,u_n^{(j)}),$
where
$$u_k^{(j)}(X)=x_k+\gamma_k^{(j)}\sum_{l=2}^{m}A_l^{(j)}z^l,\quad  \sum_{k=1}^n\gamma_k^{(j)}=0  ~\mbox{ for all $j=1, \ldots , N$,}
$$
and  $A_l^{(j)}$ are some constants.

From Lemma \ref{lem2-new}, the possibility of choosing $f=G_1 \circ \cdots \circ G_N$ means the following:
there exist two sets of numbers
$$\left\{\gamma_k^{(j)}\right\}^{n,\,N}_{k=1,\,j=1} \mbox{ with } \sum_{k=1}^n\gamma_k^{(j)}=0~\mbox{ for all $j=1, \ldots , N$,}
$$
and
$$\left \{A_l^{(j)}\right \}^{m,\,N}_{l=2,\,j=1}
$$
such that  for all $k=1, \ldots , n$,
\be\label{eq1-ex1}
\gamma_k^{(1)}A_l^{(1)}+\gamma_k^{(2)}A_l^{(2)}+\cdots+\gamma_k^{(N)}A_l^{(N)}=p_k^{(l)}
\ee
holds for each $l=2, \ldots , m$.
For the solution of this task, for every $j=1,\ldots,N$, we fix a set of nonzero numbers $\left\{\gamma_k^{(j)}\right \}^{n}_{k=1}$ such that
$\sum_{k=1}^n\gamma_k^{(j)}=0$ and the rank of the matrix $\left \{\gamma_k^{(j)}\right \}^{n,\,N}_{k=1,\,j=1}$ is equal to $(n-1)$~(we can consider $N>n$).
Then in each of the linear system of equations \eqref{eq1-ex1} for $l=2,\ldots,m$, the corresponding matrix
$\left (\gamma_k^{(j)}\right )^{n,\,N}_{k=1,\,j=1}$ of the system is one and the same and has also the rank $(n-1)$, and $A_l^{(1)},\dots,A_l^{(N)}$ play a
role of variables in the system of equations \eqref{eq1-ex1}. At the same time
in each system \eqref{eq1-ex1}, the rank of an expanded matrix $\left (\gamma_k^{(j)}\cup p_k^{(l)}\right )^{n,\,N}_{k=1,\,j=1}$
as well as the rank of $\left (\gamma_k^{(j)}\right )^{n,\,N}_{k=1,\,j=1}$ will be equal to $(n-1)$ in each system, since
$$\sum_{k=1}^np_k^{(l)}=0=\sum_{k=1}^n\gamma_k^{(j)} \quad \mbox{ for each $l$ and $j$}.
$$
Therefore, according to the theorem of Kronecker, each of the system of equations \eqref{eq1-ex1}  $(l=2,\ldots,m)$ will have the solution
$(A_l^{(1)},\ldots,A_l^{(N)})$. It finishes the proof of Theorem \ref{thm1a}.
\end{proof}

\br
For $n\ge 3$, Theorem~\ref{thm1a} significantly expands the set $P_n(m)$ of mappings for which \textbf{JC} is fair. Really,
without parameters of matrices $A$ and $B$ from Theorem \ref{cor4a}, the  set $P_n(m)$ has $[(m-1)+(n-1)]$ free parameters,
and the set of polynomial mappings from Theorem~\ref{thm1a} has  $(m-1)(n-1)$ such parameters.
\er

\subsection*{Acknowledgements}
The work of V.V. Starkov is supported Russian Science Foundation under grant 17-11-01229 and performed in Petrozavodsk State University.
The first author  is currently on leave and is working at ISI Chennai Centre, Chennai, India.



\end{document}